\newtheorem{theorem}{Theorem}[section]
\newtheorem{lemma}[theorem]{Lemma}
\newtheorem{corollary}[theorem]{Corollary}
\theoremstyle{
definition}
\newtheorem{definition}[theorem]{Definition}
\newtheorem{example}[theorem]{Example}
\begin{document}

\title{Global attractivity for some classes of Riemann--Liouville fractional
differential systems}
\author{H.T.~Tuan\thanks{%
Institute of Mathematics, Vietnam Academy of Science and Technology, 18
Hoang Quoc Viet, 10307 Ha Noi, Viet Nam, \texttt{httuan@math.ac.vn}} \and %
Adam Czornik\thanks{%
Silesian University of Technology, Faculty of Automatic Control, Electronics
and Computer Science, Akademicka 16, 44-100 Gliwice, Poland, \texttt{%
adam.czornik@polsl.pl}} \and Juan J.~Nieto\thanks{%
Departamento de Estat\'istica, An\'alise Matem\'atica e Optimizaci\'on,
Facultade de Matem\'aticas, Universidade de Santiago de Compostela, 15782
Santiago de Compostela, Spain, \texttt{juanjose.nieto.roig@usc.es}} \and %
Micha\l\ Niezabitowski\thanks{%
Silesian University of Technology, Faculty of Automatic Control, Electronics
and Computer Science, Akademicka 16, 44-100 Gliwice, Poland, \texttt{%
michal.niezabitowski@polsl.pl}}}
\maketitle

\begin{abstract}
In this paper, we present some results for existence of global solutions and
attractivity for mulidimensional fractional differential equations involving
Riemann-Liouville derivative. First, by using a Bielecki type norm and
Banach fixed point theorem, we prove a Picard-Lindel\"of type theorem on the
existence and uniqueness of solutions. Then, applying the properties of
Mittag-Leffler functions, we describe the attractivity of solutions to some
classes of Riemann--Liouville linear fractional differential systems.

 \medskip
  
  {\it MSC 2010\/}: Primary 34A08; Secondary 34A12, 34A30, 34D05

  \smallskip
  
  {\it Key Words and Phrases}: fractional differential equation, 
  Riemann-Liouville derivative,
  asymptotic behaviour of solutions, existence and uniqueness
\end{abstract}

\section{Introduction}

In recent years, fractional-order differential equations have attracted
increasing interests due to their applications in modeling anomalous
diffusion, time-dependent materials and processes with long range
dependence, allometric scaling laws, and complex networks. For more details,
we refer the reader to the monographs e.g. \cite%
{MillerRoss1993,Podlubny,KilbasEtal2006,Diethelm}. In this paper we consider
Riemann-Liouville differential systems%
\begin{equation}
D_{0_{+}}^{\alpha }x(t)=f(t,x(t)),  \label{O}
\end{equation}%
where $\alpha \in \left( 0,1\right) $ and $D_{0_{+}}^{\alpha }$ is the
Riemann-Liouvile derivative of order $\alpha $, $f:\left[ 0,\infty \right)
\times \mathbb{R}^{d}\rightarrow \mathbb{R}^{d}$ is a given function and $%
x:\left( 0,\infty \right) \rightarrow \mathbb{R}^{d}$ is the solution. The
initial value problem for (\ref{O}) we define as a problem of finding a
solution that fulfills the condition%
\begin{equation*}
\underset{t\rightarrow 0^{+}}{\lim }t^{1-\alpha }x\left( t\right) =x_{0}
\end{equation*}%
for an a priori given $x_{0}\in \mathbb{R}^{d}.$ We investigate two
fundamental problems connected to this equation: existence of a unique
global on $\left( 0,\infty \right) $ solution of the initial value problem
and the attractivity of these solutions understood as the property of
tending to zero of each solution. To prove the existence and uniqueness of
the initial value problem, in the space $C_{1-\alpha }$ $\left( \left[
0,\infty \right) ,\mathbb{R}^{s}\right) $ consisting of all continuous
functions $f:\left( 0,\infty \right) \rightarrow \mathbb{R}^{s}$ such that
there exists the limit 
\begin{equation*}
\underset{t\rightarrow 0^{+}}{\lim }t^{1-\alpha }f(t),
\end{equation*}%
we assume that there exists a bounded (or continuous) nonnegative function $%
L:\left[ 0,\infty \right) \rightarrow \left[ 0,\infty \right) $ such that 
\begin{equation}
\left\Vert f\left( t,x\right) -f\left( t,y\right) \right\Vert \leq L\left(
t\right) \left\Vert x-y\right\Vert  \label{A1}
\end{equation}%
for all $t\in \left[ 0,\infty \right) $ and $x,y\in \mathbb{R}^{d}.$ For the
attractivity problem we assume that the function $f$ has the following form%
\begin{equation*}
f(t,x)=Ax(t)+Q\left( t\right) x(t)+g(t).
\end{equation*}%
It seems that the problem for existence of solution of (\ref{O}) was for the
first time considered in \cite{Pitcher} in case of $d=1$ and zero initial
condition under assumption that $f$ is bounded, continuous and Lipschitzian
in the second variable. This result has been extended to arbitrary initial
condition in \cite{Bassam}. Next paper dealing with the initial value
problem for global solution of Riemann-Liouville differential systems is 
\cite{Delbosco}. In this paper the authors consider the one-dimensional case
($d=1$) and they prove that for a stationary system 
\begin{equation*}
D_{0_{+}}^{\alpha }x(t)=f(x(t))
\end{equation*}%
the condition (\ref{A1}) with a constant function $L$ is a sufficient
condition for the existence and uniqueness of the solution of the initial
value problem. After this paper the problem of existence of a global
solution of Riemann-Liouville differential systems has been also considered
in \cite{Koua,Idczak,Trif}. The results of the first paper are still about
scalar equation and provide sufficient conditions for existence of at least
one solution. The conditions are as follows:

1) $f(t,x(t))\in C_{1-\alpha }(\left[ 0,\infty \right) ,\mathbb{R})$ for any 
$x(t)\in C_{1-\alpha }(\left[ 0,\infty \right) ,\mathbb{R})$;

2) there exist three nonnegative continuous functions $p(t),w(t)$ and $q(t)$
defined on $\left[ 0,\infty \right) $, $p(t)$ and $q(t)$ are bounded, such
that%
\begin{align*}
|f(t,x)|&\leq p(t)w\left( \frac{\left\vert x\right\vert }{1+t^{2}}\right)
+q(t), \\
w(t)&<t
\end{align*}%
for all $t\in \left[ 0,\infty \right) $ and $x\in \mathbb{R}$;

3) 
\begin{equation*}
\underset{t\geq 0}{\sup }\int_{0}^{t}\frac{t^{1-\alpha }\left( t-s\right)
^{\alpha -1}s^{\alpha -1}}{1+t^{2}}p(s)ds<\Gamma \left( \alpha \right).
\end{equation*}

Even these conditions guarantee only existence not uniqueness of a solution
of the initial value problem it is difficult to compare them with our ones.
Also for the one-dimensional case there are the results presented in \cite%
{Trif}, where the author proved existence and uniqueness of the global
solution of the initial value problem if the function $f$ has the following
form 
\begin{equation*}
f(t,x)=p(t)x+q(t),
\end{equation*}%
where $p\in C_{\beta }(\left[ 0,\infty \right) ,\mathbb{R}),$ $q\in
C_{1-\alpha }(\left[ 0,\infty \right) ,\mathbb{R})$, $p$ and $q$ are
nonnegative and $0\leq \beta <\alpha .$ The multidimensional version of
existence and uniqueness of the global solution of the initial value problem
has been discussed in \cite{Idczak}. In this paper the authors, motivated by
control theory applications, study this problem in the space of summable
function and show that if the condition (\ref{A1}) with a constant function $%
L$ is satisfied and the function $f\left( \cdot ,0\right) $ is summable,
then there exists a unique global solution of the initial value problem
which is summable. In the above mentioned papers also the results for local
solution are presented and further results of this kind are published in 
\cite{Diethelm} and \cite{Trujillo}.

The problem of attractivity sometimes also called incorrectly asymptotic
stability of solutions of nonlinear Riemann-Liouville differential systems
has been considered in \cite{Qian,Qin,Chen,Zhou}. The authors of \cite{Qian}
consider the right-hand side of (\ref{O}) in the form%
\begin{equation*}
f(t,x)=Ax(t)+Q(t)x(t)
\end{equation*}%
and claim that the stability of the matrix $A$ and boundedness of the
function $Q$ implies attractivity. As it was noticed in \cite{Cong} this is
not true (it is enough to consider $Q(t)=C-A,$ where $C$ is any $d$ by $d$
matrix). In \cite{Chen} a one-dimensional version of the equation (\ref{O})
is investigated. The presented in this paper conditions are however
difficult to check since they are expressed in the terms of properties of
the function $f$ \ as a functional on the set $\left( 0,\infty \right)
\times C\left( \left( 0,\infty \right) ,\mathbb{R}\right) $, where $C\left(
\left( 0,\infty \right) ,\mathbb{R}\right) $ is the set of all continuous
functions $g:\left( 0,\infty \right) \rightarrow \mathbb{R}$. The main
result of \cite{Qin} (Theorem 1) is obtained under the assumptions that 
\begin{equation*}
\left\Vert t^{\alpha -1}E_{\alpha ,\alpha }\left( At^{\alpha }\right)
\right\Vert \leq Me^{-\gamma t}
\end{equation*}%
for all $t\in \left[ 0,\infty \right) $ and certain positive $M$ and $\gamma
.$ However, this assumption is never satisfied since 
\begin{equation*}
\underset{t\rightarrow 0^{+}}{\lim }\left\Vert t^{\alpha -1}E_{\alpha
,\alpha }\left( At^{\alpha }\right) \right\Vert =\infty ,
\end{equation*}%
whereas 
\begin{equation*}
\underset{t\rightarrow 0^{+}}{\lim }Me^{-\gamma t}=1.
\end{equation*}%
Finally, in \cite{Zhou} the case of Riemann-Liouville differential equation
in a Banach space $\left( X,\left\Vert \cdot \right\Vert \right) $ is
considered. The author shows that the hypotheses:

1) $\left\Vert f\left( t,x\right) \right\Vert \leq Lt^{-\beta }\left\Vert
x\right\Vert ^{\delta }$ $\ $for all $t\in \left( 0,\infty \right) $, $x\in
X $ and certain $L\geq 0,$ $\alpha <\beta <1$ and $\delta \in \mathbb{R}$;

2) there exists a constant $\kappa >0$ such that for any bounded set $%
E\subset X,$%
\begin{equation*}
\sigma \left( f\left( t,E\right) \right) \leq \kappa \sigma \left( E\right) ,
\end{equation*}%
where $\sigma $ is the Hausdorf measure of noncompactness, guarantee
existence of at least one attractive solution for the initial value problem.

\section{Preliminaries}

Consider the equation%
\begin{equation}
D_{0_{+}}^{\alpha }x(t)=Ax(t)+Q\left( t\right) x(t)+g(t),  \label{1}
\end{equation}%
where $\alpha \in (0,1)$, and the Riemann--Liouville fractional derivative 
\begin{equation*}
D_{0+}^{\alpha }x(t):=\frac{d}{dt}I_{0+}^{1-\alpha }x(t)
\end{equation*}%
is defined with the Riemann--Liouville fractional integral 
\begin{equation*}
I_{0+}^{\beta }x(t):=\frac{1}{\Gamma (\beta )}\int_{0}^{t}(t-\tau )^{1-\beta
}x(\tau )d\tau ,
\end{equation*}%
for $\beta >0$ and $I_{0+}^{0}x(t):=x(t)$, see \cite[p.~13 \& p.~27]%
{Diethelm} and \cite[p.~2]{Nieto2}. Denote by $C_{1-\alpha }$ $\left([
0,\infty) ,\mathbb{R}^{s}\right) $ the set of all continuous functions $%
f:\left( 0,\infty \right) \rightarrow \mathbb{R}^{s}$ such that there exists
the limit 
\begin{equation*}
\underset{t\rightarrow 0^{+}}{\lim }\;t^{1-\alpha }f(t)
\end{equation*}%
and $C_{1-\alpha }^{0}$ $\left( \left[ 0,\infty \right) ,\mathbb{R}%
^{s}\right) $ the subset of $C_{1-\alpha }$ $\left( \left([ 0,\infty \right)
,\mathbb{R}^{s}\right) $ consisting of all functions $f\in C_{1-\alpha }$ $%
\left( \left[ 0,\infty \right) ,\mathbb{R}^{s}\right) $ satisfying%
\begin{equation*}
\underset{t\geq 0}{\sup }\;t^{1-\alpha }\left\Vert f(t)\right\Vert <\infty,
\end{equation*}%
where $\|\cdot\|$ is an arbitrary norm on $\mathbb{R}^s$. For $f\in
C_{1-\alpha }^{0}$ $\left( \left[ 0,\infty \right) ,\mathbb{R}^{s}\right) $
denote 
\begin{equation*}
\left\Vert f\right\Vert _{1-\alpha }=\underset{t\geq 0}{\sup }t^{1-\alpha
}\left\Vert f(t)\right\Vert .
\end{equation*}%
It is obvious to see that $\left\Vert \cdot \right\Vert _{1-\alpha }$ is a
norm and the space $\left( C_{1-\alpha }^{0}\left( \left[ 0,\infty \right) ,%
\mathbb{R}^{s}\right) ,\left\Vert \cdot \right\Vert _{1-\alpha }\right) $ is
a Banach space. In through this paper, we define 
\begin{equation*}
\Lambda _{\alpha }^{s}:=\left\{ \lambda \in \mathbb{C}\setminus \{0\}:|\arg {%
(\lambda )}|>\frac{\alpha \pi }{2}\right\} .
\end{equation*}%
For any matrix $A\in \mathbb{R}^{s\times s}$, the set $\sigma (A)$ is the
spectrum of $A$, i.e. 
\begin{equation*}
\sigma (A):=\{\lambda \in \mathbb{C}:\lambda \;\text{is an eigenvalue of the
matrix}\;A\}.
\end{equation*}%
Furthermore, we use $|||A|||$ to denote the norm of $A$ respect to the norm $%
\|\cdot\|$ on $\mathbb{R}^s$.

In our further consideration we will use the following result.

\begin{lemma}
Let $A\in \mathbb{R}^{s\times s}$ and suppose that $\sigma(A)\subset
\Lambda_\alpha^s$. Then the following statements are valid.

\begin{itemize}
\item[(i)] There exists $t_{0}>0$ and a positive constant $M$ which depends
on parameters $t_0$, $\alpha$, $A$ such that 
\begin{equation*}
t^{\alpha -1}||| E_{\alpha ,\alpha }(t^{\alpha }A)||| \leq \frac{M}{%
t^{\alpha +1}},\quad \forall t\geq t_{0}.
\end{equation*}

\item[(ii)] The quantity%
\begin{equation*}
t^{1-\alpha }\int_{0}^{t}(t-\tau )^{\alpha -1}||| E_{\alpha ,\alpha
}((t-\tau )^{\alpha }A)||| \tau ^{\alpha -1}d\tau
\end{equation*}
is bounded on $[0,\infty )$, i.e. 
\begin{equation*}
\sup_{t\geq 0}\;t^{1-\alpha }\int_{0}^{t}(t-\tau )^{\alpha -1}||| E_{\alpha
,\alpha }((t-\tau )^{\alpha }A)||| \tau ^{\alpha -1}d\tau <\infty .
\end{equation*}
\end{itemize}
\end{lemma}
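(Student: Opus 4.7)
\emph{Part (i).} My plan is to invoke the Poincar\'e asymptotic expansion of the Mittag--Leffler function in the sector $|\arg z|>\alpha\pi/2$: for any integer $N\geq 1$,
\begin{equation*}
E_{\alpha,\alpha}(z) = -\sum_{k=1}^{N}\frac{z^{-k}}{\Gamma(\alpha-\alpha k)} + O(|z|^{-N-1}), \qquad |z|\to\infty.
\end{equation*}
Because $1/\Gamma(0)=0$, the leading nonzero term is $-z^{-2}/\Gamma(-\alpha)$, so $|E_{\alpha,\alpha}(z)|=O(|z|^{-2})$ in this sector, and differentiation of the expansion (valid on slightly smaller sectors) yields $E^{(k)}_{\alpha,\alpha}(z)=O(|z|^{-2-k})$. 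To pass from scalars to matrices I use the Jordan decomposition $A=P(D+N_A)P^{-1}$ with $D$ diagonal carrying the eigenvalues $\lambda_j\in\Lambda_\alpha^s$ and $N_A$ nilpotent commuting with $D$, which gives
\begin{equation*}
E_{\alpha,\alpha}(t^\alpha A)=P\left(\sum_{k=0}^{s-1}\frac{t^{\alpha k}}{k!}\,E^{(k)}_{\alpha,\alpha}(t^\alpha D)\,N_A^{\,k}\right)P^{-1}.
\end{equation*}
Since each eigenvalue of $t^\alpha D$ stays in the fixed sector and tends to infinity with $t$, every summand is of order $t^{\alpha k}\cdot t^{-\alpha(2+k)}=t^{-2\alpha}$. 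Therefore $|||E_{\alpha,\alpha}(t^\alpha A)|||=O(t^{-2\alpha})$ as $t\to\infty$, and multiplying by $t^{\alpha-1}$ yields the claimed $O(t^{-\alpha-1})$ bound; choosing $t_{0}$ so that the $O$-constant is absorbed produces $M$.

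\emph{Part (ii).} Set $h(s):=s^{\alpha-1}|||E_{\alpha,\alpha}(s^\alpha A)|||$ and
\begin{equation*}
J(t):=t^{1-\alpha}\int_{0}^{t}h(t-\tau)\,\tau^{\alpha-1}\,d\tau.
\end{equation*}
By part (i), $h(s)\leq M s^{-\alpha-1}$ for $s\geq t_{0}$, and since $E_{\alpha,\alpha}$ is entire (so the matrix Mittag--Leffler is continuous in $s\geq 0$), there is $K>0$ with $h(s)\leq K s^{\alpha-1}$ on $(0,t_{0}]$. For $t\leq 2t_{0}$, a Beta-function estimate gives $J(t)\leq K\,B(\alpha,\alpha)\,t^{\alpha}$, which is bounded on $[0,2t_{0}]$. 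For $t>2t_{0}$, I partition $(0,t)$ into $(0,t/2]\cup[t/2,t-t_{0}]\cup[t-t_{0},t)$: on the first interval $t-\tau\geq t/2\geq t_{0}$, so $h(t-\tau)\leq M(t/2)^{-\alpha-1}$ and the remaining integral of $\tau^{\alpha-1}$ is of order $t^{\alpha}$, giving a contribution of order $t^{-\alpha}$ after multiplication by $t^{1-\alpha}$; on the second interval I use $h(t-\tau)\leq M(t-\tau)^{-\alpha-1}$ together with $\tau^{\alpha-1}\leq(t/2)^{\alpha-1}$ and integrate in $u=t-\tau$ from $t_{0}$ to $t/2$, obtaining a term uniformly bounded in $t$; on the third interval I use $h(t-\tau)\leq K(t-\tau)^{\alpha-1}$ and $\tau^{\alpha-1}\leq(t-t_{0})^{\alpha-1}\leq 2^{1-\alpha}t^{\alpha-1}$ (valid because $t\geq 2t_{0}$). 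Summing the three pieces and multiplying by $t^{1-\alpha}$ produces a bound independent of $t$.

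The main obstacle is the bookkeeping in part (ii): the singularity of $\tau^{\alpha-1}$ at $\tau=0$, the singularity of $h$ at the origin, and the polynomial decay of $h$ at infinity must all be reconciled within a single estimate uniform in $t$. The threshold $t_{0}$ from part (i) plays a double role here, guiding both the coarse split at $\tau=t/2$ and the fine split at $\tau=t-t_{0}$, so that on each resulting subinterval exactly one of the two available bounds on $h$ is appropriate.
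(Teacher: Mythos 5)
Your proposal is correct and, for part (ii), follows essentially the same route as the paper: the paper also treats small $t$ by continuity of $E_{\alpha,\alpha}(t^{\alpha}A)$ together with the Beta-function identity, and for $t>2t_{0}$ splits the integral at $\tau=t-t_{0}$ and again at $\tau=t/2$, using the decay estimate of (i) on the two pieces where $t-\tau\ge t_{0}$ and the boundedness of $u^{\alpha-1}|||E_{\alpha,\alpha}(u^{\alpha}A)|||$ near $u=0$ on the last piece, exactly as you do. For part (i) the paper gives no argument at all, simply citing Lemmas 4 and 5 of the reference [Tuan]; you instead supply the standard proof via the Poincar\'e asymptotic expansion of $E_{\alpha,\alpha}$ in the sector $|\arg z|>\alpha\pi/2$ (where the $z^{-1}$ term drops out since $1/\Gamma(0)=0$), derivative bounds on slightly smaller sectors, and the Jordan-block formula. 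This is in substance what the cited lemmas contain, so it is a fill-in of outsourced detail rather than a genuinely different method; its benefit is that the lemma becomes self-contained. Two minor bookkeeping points: in your small-$t$ case ($t\le 2t_{0}$) the argument $t-\tau$ ranges over $(0,2t_{0})$, so the continuity constant $K$ should be a supremum over $[0,2t_{0}]$ rather than $(0,t_{0}]$ as stated; and when invoking the expansion for the matrix case you should fix one $\mu$ with $\alpha\pi/2<\mu\le\min_{j}|\arg\lambda_{j}|$ and $\mu<\min\{\pi,\alpha\pi\}$, which is possible because the spectrum is finite and lies in the open sector. Both are trivial adjustments.
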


\begin{proof}
\noindent (i) The proof is obtained by using \cite[Lemma 4]{Tuan} and
arguments as in \cite[Lemma 5]{Tuan}.

\noindent (ii) Let $T>2t_{0}$ be an arbitrary constant. First, we consider
the case $t\in \lbrack 0,T]$. Due to the fact that $E_{\alpha ,\alpha
}(t^{\alpha }A)$ is continuous on $[0,T]$, we have 
\begin{align*}
t^{1-\alpha }\int_{0}^{t}(t-\tau )^{\alpha -1}||| E_{\alpha ,\alpha
}((t-\tau )^{\alpha }A)||| \tau ^{\alpha -1}d\tau & \leq C_{1}t^{1-\alpha
}\int_{0}^{t}(t-\tau )^{\alpha -1}\tau ^{\alpha -1}d\tau \\
& =C_{1}t^{1-\alpha }t^{2\alpha -1}B (\alpha ,\alpha ) \\
& =C_{1}t^{\alpha }B (\alpha ,\alpha ) \\
& \leq C_{1}T^{\alpha }B (\alpha ,\alpha ),
\end{align*}%
where $C_{1}:=\sup_{t\in \lbrack 0,T]}\Vert E_{\alpha ,\alpha }(t^{\alpha
}A)\Vert $, and $B (\alpha ,\alpha )$ is the Beta function.

For the case $t>T>2t_{0}$, we have 
\begin{align*}
t^{1-\alpha }\int_{0}^{t}(t-\tau )^{\alpha -1}||| E_{\alpha ,\alpha
}((t-\tau )^{\alpha }A)||| \tau ^{\alpha -1}d\tau & \leq t^{1-\alpha
}\int_{0}^{t-t_{0}}(t-\tau )^{\alpha -1}||| E_{\alpha ,\alpha }((t-\tau
)^{\alpha }A)||| \tau ^{\alpha -1}d\tau \\
& \hspace{1cm}+ t^{1-\alpha }\int_{t-t_{0}}^{t}(t-\tau )^{\alpha -1}|||
E_{\alpha ,\alpha }((t-\tau )^{\alpha }A)||| \tau ^{\alpha -1}d\tau \\
& \leq t^{1-\alpha }(I_{1}(t)+I_{2}(t)),
\end{align*}%
where%
\begin{equation*}
I_{1}(t):=\int_{0}^{t-t_{0}}(t-\tau )^{\alpha -1}||| E_{\alpha ,\alpha
}((t-\tau )^{\alpha }A)||| \tau ^{\alpha -1}d\tau
\end{equation*}%
and%
\begin{equation*}
I_{2}(t):=\int_{t-t_{0}}^{t}(t-\tau )^{\alpha -1}||| E_{\alpha ,\alpha
}((t-\tau )^{\alpha }A)||| \tau ^{\alpha -1}d\tau .
\end{equation*}
From (i), we see that 
\begin{align*}
I_{1}(t)& \leq M\int_{0}^{t-t_{0}}\frac{1}{(t-\tau )^{\alpha +1}}\tau
^{\alpha -1}d\tau \\
& =M\int_{0}^{t/2}\frac{1}{(t-\tau )^{\alpha +1}}\tau ^{\alpha -1}d\tau
+M\int_{t/2}^{t-t_{0}}\frac{1}{(t-\tau )^{\alpha +1}}\tau ^{\alpha -1}d\tau
\\
& \leq M\frac{2^{\alpha +1}}{t^{\alpha +1}}\int_{0}^{t/2}\tau ^{\alpha
-1}d\tau +M\left( \frac{t}{2}\right) ^{\alpha -1}\int_{t_{0}}^{t/2}\frac{1}{%
u^{\alpha +1}}du \\
& \leq M\left( \frac{2}{\alpha t}+\frac{2^{1-\alpha }}{\alpha t_{0}^{\alpha
}t^{1-\alpha }}\right) .
\end{align*}%
Thus, 
\begin{align}
t^{1-\alpha }I_{1}(t)& \leq M\left( \frac{2}{\alpha t^{\alpha }}+\frac{%
2^{1-\alpha }}{\alpha t_{0}^{\alpha }}\right)  \notag \\
& \leq M\left( \frac{2}{\alpha T^{\alpha }}+\frac{%
2^{1-\alpha }}{\alpha t_{0}^{\alpha }}\right) .  \label{est1}
\end{align}%
On the other hand, for $t>T>2t_{0}$, we see that 
\begin{align}
t^{1-\alpha }I_{2}(t)& \leq t^{1-\alpha }(t-t_{0})^{\alpha
-1}\int_{0}^{t_{0}}u^{\alpha -1}||| E_{\alpha ,\alpha }(u^{\alpha }A)||| \;du
\notag \\
& \leq 2^{1-\alpha }t_{0}^{\alpha }E_{\alpha ,\alpha +1}(t_{0}^{\alpha }|||
A|||).  \label{est2}
\end{align}%
From \eqref{est1} and \eqref{est2}, we have 
\begin{equation*}
\sup_{t\geq 0}t^{1-\alpha }\int_{0}^{t}(t-\tau )^{\alpha -1}||| E_{\alpha
,\alpha }((t-\tau )^{\alpha }A)||| \tau ^{\alpha -1}\;d\tau <\infty .
\end{equation*}%
The proof is complete.
\end{proof}

\section{Existence and uniqueness of solutions to Riemann--Liouville
fractional differential systems}

The next theorem contains the main result of this paper regarding the
existence and uniqueness of solution.

\begin{theorem}
\label{main_result1} Suppose that the function $f:\left[ 0,\infty \right)
\times \mathbb{R}^{s}\rightarrow \mathbb{R}^{s}$ is continuous and there
exists a bounded (or continuous) nonnegative function $L:\left[ 0,\infty
\right) \rightarrow \left[ 0,\infty \right) $ such that 
\begin{equation*}
\left\Vert f\left( t,x\right) -f\left( t,y\right) \right\Vert \leq L\left(
t\right) \left\Vert x-y\right\Vert
\end{equation*}%
for all $t\in \left[ 0,\infty \right) $ and $x,y\in \mathbb{R}^{s},$ then
the equation 
\begin{equation}
D_{0_{+}}^{\alpha }x(t)=f(t,x)  \label{3}
\end{equation}%
with the initial condition 
\begin{equation}
\underset{t\rightarrow 0^{+}}{\lim }t^{1-\alpha }x\left( t\right) =x_{0},
\label{4}
\end{equation}%
has a unique solution in the space $C_{1-\alpha }([0,\infty );\mathbb{R}%
^{s}) $ for all $x_{0}\in \mathbb{R}^{s}$.
\end{theorem}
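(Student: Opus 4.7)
The plan is to rewrite the initial value problem as an equivalent Volterra integral equation and to apply the Banach fixed point theorem on each compact interval $[0,T]$ using a Bielecki-type weighted norm, then to patch the resulting local solutions into a unique global one by uniqueness.

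First, I would transform \eqref{3}--\eqref{4} into
\begin{equation*}
x(t) = x_{0}\,t^{\alpha-1} + \frac{1}{\Gamma(\alpha)}\int_{0}^{t}(t-\tau)^{\alpha-1} f(\tau,x(\tau))\,d\tau,\qquad t>0.
\end{equation*}
Applying $I_{0+}^{\alpha}$ to both sides of \eqref{3} and using the composition rule for Riemann--Liouville operators together with $\lim_{t\to 0^{+}} I_{0+}^{1-\alpha} x(t) = \Gamma(\alpha)\,x_{0}$, which is equivalent to \eqref{4}, yields this equation. Conversely any continuous solution of it lies in $C_{1-\alpha}$, because the Beta identity $\int_{0}^{t}(t-\tau)^{\alpha-1}\tau^{\alpha-1}\,d\tau = B(\alpha,\alpha)\,t^{2\alpha-1}$ forces the integral term, multiplied by $t^{1-\alpha}$, to vanish as $t\to 0^{+}$. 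Thus it suffices to produce a unique fixed point of the corresponding Picard operator $\Phi$.

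For a fixed $T>0$ I would work in the Banach space $C_{1-\alpha}([0,T];\mathbb{R}^{s})$ equipped with the Bielecki-type norm
\begin{equation*}
\|x\|_{T,\lambda} := \sup_{t\in[0,T]} t^{1-\alpha}\,e^{-\lambda t}\,\|x(t)\|,
\end{equation*}
where $\lambda>0$ is to be chosen. Since the weight $e^{-\lambda t}$ is bounded above and below on $[0,T]$, this norm is equivalent to the usual weighted sup-norm, so the space is complete. One verifies directly that $\Phi$ sends the space into itself, the only subtle point being $\lim_{t\to 0^{+}} t^{1-\alpha}\Phi x(t) = x_{0}$, again handled by the Beta estimate above. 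For any two elements $x,y$ the Lipschitz hypothesis on $f$ gives, after multiplying by $t^{1-\alpha}e^{-\lambda t}$, the bound
\begin{equation*}
t^{1-\alpha} e^{-\lambda t}\,\|\Phi x(t)-\Phi y(t)\| \;\leq\; \frac{L_{T}^{\ast}}{\Gamma(\alpha)}\,h_{\lambda}(t)\,\|x-y\|_{T,\lambda},
\end{equation*}
where $L_{T}^{\ast}:=\sup_{t\in[0,T]} L(t)<\infty$ (finite in both the bounded and the continuous case) and
\begin{equation*}
h_{\lambda}(t) \;:=\; t^{1-\alpha}\,e^{-\lambda t}\int_{0}^{t}(t-\tau)^{\alpha-1}\tau^{\alpha-1}\,e^{\lambda\tau}\,d\tau.
\end{equation*}

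The main technical obstacle is the uniform-in-$t$ estimate $\sup_{t\in[0,T]} h_{\lambda}(t)\to 0$ as $\lambda\to\infty$. The substitution $\tau = ts$ recasts $h_{\lambda}(t)$ as $t^{\alpha}\int_{0}^{1} s^{\alpha-1}(1-s)^{\alpha-1}\,e^{-\lambda t(1-s)}\,ds$; splitting at $s=1/2$, the contribution from $[0,1/2]$ is controlled by $B(\alpha,\alpha)\,t^{\alpha}\,e^{-\lambda t/2}$, while the one from $[1/2,1]$, via the change of variable $u=\lambda t(1-s)$, is bounded by a constant multiple of $\lambda^{-\alpha}$; both bounds tend to $0$ uniformly on $[0,T]$ as $\lambda\to\infty$. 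Choosing $\lambda$ so large that $L_{T}^{\ast}\sup_{t}h_{\lambda}(t)<\Gamma(\alpha)$ makes $\Phi$ a strict contraction, so Banach's theorem furnishes a unique fixed point $x_{T}\in C_{1-\alpha}([0,T];\mathbb{R}^{s})$. Finally, uniqueness across horizons $T_{1}<T_{2}$ forces $x_{T_{2}}|_{[0,T_{1}]}=x_{T_{1}}$, so letting $T\to\infty$ yields the desired unique solution $x\in C_{1-\alpha}([0,\infty);\mathbb{R}^{s})$.
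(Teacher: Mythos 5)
Your proposal is correct and follows essentially the same route as the paper: reduce the initial value problem to the weakly singular Volterra integral equation, apply the Banach fixed point theorem on each $[0,T]$ in $C_{1-\alpha}$ with a Bielecki-type weight $e^{-\lambda t}$, and patch by uniqueness. The only difference is cosmetic: you estimate the weighted kernel integral via the substitution $\tau = ts$ and a split at $s=1/2$, whereas the paper splits the $\tau$-integral at $t/2$ directly; both yield the same $O(\lambda^{-\alpha})$ contraction constant.
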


\begin{proof}
To complete the proof of this theorem we only need proving that for any $T>0$
the following integral equation 
\begin{equation}  \label{integral_eq1}
x(t)=\frac{x_{0}}{t^{1-\alpha }}+\frac{1}{\Gamma \left( \alpha \right) }%
\int_{0}^{t}\left( t-\tau\right) ^{\alpha -1}f\left( \tau,x(\tau)\right)
d\tau
\end{equation}
has a unique solution on the interval $[0,T]$ for every $x_0\in\mathbb{R}^s$.%
\newline

For any $x_{0}\in \mathbb{R}^{s}$ let us define an operator $\mathcal{T}%
_{x_{0}}: $ $C_{1-\alpha }\left( \left[ 0,T\right] ,\mathbb{R}^{s}\right)
\rightarrow C_{1-\alpha }\left( \left[ 0,T\right] ,\mathbb{R}^{s}\right) $
by the following formula%
\begin{equation*}
\left( \mathcal{T}_{x_{0}}\xi \right) \left( t\right) =\frac{x_{0}}{%
t^{1-\alpha }}+\frac{1}{\Gamma \left( \alpha \right) }\int_{0}^{t}\left(
t-\tau\right) ^{\alpha -1}f\left( \tau,\xi (\tau)\right) d\tau,\quad \forall
t>0.
\end{equation*}%
This operator is well-defined. Indeed, for any $\xi \in C_{1-\alpha }\left( %
\left[ 0,T\right] ,\mathbb{R}^{s}\right) $, we see 
\begin{align*}
t^{1-\alpha }\int_{0}^{t}(t-\tau )^{\alpha -1}\Vert f(\tau ,\xi (\tau
))\Vert d\tau & \leq Lt^{1-\alpha }\int_{0}^{t}(t-\tau )^{\alpha -1}\Vert
\xi (\tau )\Vert d\tau +t^{1-\alpha }\int_{0}^{t}(t-\tau )^{\alpha -1}\Vert
f(\tau ,0)\Vert d\tau \\
& \leq Lt^{1-\alpha }\int_{0}^{t}(t-\tau )^{\alpha -1}\tau ^{\alpha -1}d\tau
\times \Vert \xi (\tau )\Vert _{1-\alpha ,T} \\
&\hspace{1cm}+ t^{1-\alpha }\sup_{t\in \lbrack 0,T]}\Vert f(t,0)\Vert
\int_{0}^{t}(t-\tau )^{\alpha -1}d\tau \\
& = Lt^{\alpha }B (\alpha ,\alpha )\times \Vert \xi \Vert _{1-\alpha }+\frac{%
\sup_{t\in \lbrack 0,T]}\Vert f(t,0)\Vert }{\alpha }t,
\end{align*}%
where $L:=\underset{t\in \left[ 0,T\right] }{\sup }\Vert L(t)\Vert $, and $%
\Vert \xi \Vert _{1-\alpha ,T}:=\sup_{t\in \lbrack 0,T]}t^{1-\alpha }\Vert
\xi (t)\Vert.$ This shows that%
\begin{equation*}
\lim_{t\rightarrow 0^{+}}t^{1-\alpha }\left( \mathcal{T}_{x_{0}}\xi \right)
\left( t\right) =x_{0}.
\end{equation*}%
Now we consider $t_{0}\in (0,T)$ arbitrarily. For $h>0$ small enough, we
have 
\begin{align*}
& \hspace{-1cm}\left\Vert \int_{0}^{t_{0}+h}(t_{0}+h-\tau )^{\alpha
-1}f(\tau ,\xi (\tau ))d\tau -\int_{0}^{t_{0}}(t_{0}-\tau )^{\alpha -}f(\tau
,\xi (\tau ))d\tau \right\Vert \\
& \leq \int_{t_{0}}^{t_{0}+h}(t_{0}+h-\tau )^{\alpha -1}\Vert f(\tau ,\xi
(\tau ))\Vert d\tau +\int_{0}^{t_{0}}\left( (t_{0}-\tau )^{\alpha
-1}-(t_{0}+h-\tau )^{\alpha -1}\right) \Vert f(\tau ,\xi (\tau ))\Vert d\tau
\\
& =I_{1}(h)+I_{2}(h),
\end{align*}%
where%
\begin{equation*}
I_{1}(h):=\int_{t_{0}}^{t_{0}+h}(t_{0}+h-\tau )^{\alpha -1}\Vert f(\tau ,\xi
(\tau ))\Vert d\tau ,
\end{equation*}%
\begin{equation*}
I_{2}(h):=\int_{0}^{t_{0}}\left( (t_{0}-\tau )^{\alpha -1}-(t_{0}+h-\tau
)^{\alpha -1}\right) \Vert f(\tau ,\xi (\tau ))\Vert d\tau .
\end{equation*}%
By direct computation, 
\begin{align}
I_{1}(h)& \leq \sup_{t\in \lbrack t_{0},t_{0}+h]}\Vert f(t,\xi (t))\Vert
\int_{t_{0}}^{t_{0}+h}(t_{0}+h-\tau )^{\alpha -1}\;d\tau  \notag \\
&= \sup_{t\in \lbrack t_{0},t_{0}+h]}\Vert f(t,\xi (t))\Vert \times \frac{%
h^{\alpha }}{\alpha }.  \label{continuity1}
\end{align}%
Furthermore, 
\begin{align*}
I_{2}(h)& \leq L\Vert \xi \Vert _{1-\alpha ,T}\int_{0}^{t_{0}}\left(
(t_{0}-\tau )^{\alpha -1}-(t_{0}+h-\tau )^{\alpha -1}\right) \tau ^{\alpha
-1}d\tau \\
&\hspace{2cm} +\sup_{t\in \lbrack 0,t_{0}]}\Vert f(t,0)\Vert \int_{0}^{t_{0}}\left(
(t_{0}-\tau )^{\alpha -1}-(t_{0}+h-\tau )^{\alpha -1}\right) d\tau \\
& \leq I_{2,1}(h)+I_{2,2}(h).
\end{align*}%
Note that 
\begin{align*}
I_{2,1}(h)& =L\Vert \xi \Vert _{1-\alpha ,T}\int_{0}^{t_{0}}\left(
(t_{0}-\tau )^{\alpha -1}-(t_{0}+h-\tau )^{\alpha -1}\right) \tau ^{\alpha
-1}d\tau \\
& = L\Vert \xi \Vert _{1-\alpha ,T}\int_{0}^{t_{0}/2}\left( (t_{0}-\tau
)^{\alpha -1}-(t_{0}+h-\tau )^{\alpha -1}\right) \tau ^{\alpha -1}d\tau \\
& \hspace*{1cm}+L\Vert \xi \Vert _{1-\alpha ,T}\int_{t_{0}/2}^{t_{0}} \left(
(t_{0}-\tau )^{\alpha -1}-(t_{0}+h-\tau )^{\alpha -1}\right) \tau ^{\alpha
-1}d\tau \\
& \leq \frac{L\Vert \xi \Vert _{1-\alpha ,T}}{\alpha }\left( (t_{0}-\tau
^\ast )^{\alpha -1}-(t_{0}+h-\tau^ \ast )^{\alpha -1}\right) \left( \frac{%
t_{0}}{2}\right) ^{\alpha } \\
& \hspace*{1cm}+\left( \frac{t_{0}}{2}\right) ^{\alpha }\frac{L\Vert \xi
\Vert _{1-\alpha ,T}}{\alpha }\int_{t_{0}/2}^{t_{0}}\left( (t_{0}-\tau
)^{\alpha -1}-(t_{0}+h-\tau )^{\alpha -1}\right) d\tau \\
& \leq \frac{L\Vert \xi \Vert _{1-\alpha ,T}}{\alpha }\left( (t_{0}-\tau^
\ast )^{\alpha -1}-(t_{0}+h-\tau^ \ast )^{\alpha -1}\right) \left( \frac{%
t_{0}}{2}\right) ^{\alpha } \\
& \hspace*{1cm}+\left( \frac{t_{0}}{2}\right) ^{\alpha }\frac{L\Vert \xi
\Vert _{1-\alpha ,T}}{\alpha }h^{\alpha },
\end{align*}%
for some $\tau^\ast \in (0,t_{0}/2)$. Furthermore, 
\begin{align*}
I_{2,2}(h)& =\sup_{t\in \lbrack 0,t_{0}]}\Vert f(t,0)\Vert
\int_{0}^{t_{0}}\left( (t_{0}-\tau )^{\alpha -1}-(t_{0}+h-\tau )^{\alpha
-1}\right) d\tau \\
& \leq \sup_{t\in \lbrack 0,t_{0}]}\Vert f(t,0)\Vert \times \frac{h^{\alpha }%
}{\alpha }.
\end{align*}%
Thus the function $\mathcal{T}_{x_{0}}\xi (t)$ is continuous on the
right-hand side at $t_{0}$. The left-hand side continuity at $t_{0}$ is
proved similarly, so for any $\xi \in C_{1-\alpha }^{0}([0,\infty ),\mathbb{R%
}^{s})$, the function $\mathcal{T}_{x_{0}}\xi (t)$ is continuous on $(0,T]$.

Let $\gamma >0$ be arbitrary but fixed, we have the estimates%
\begin{align}
\frac{\left\Vert t^{1-\alpha }\left( \mathcal{T}_{x_{0}}\xi -\mathcal{T}%
_{x_{0}}\tilde{\xi }\right) \left( t\right) \right\Vert }{e^{\gamma t}}&\leq 
\frac{t^{1-\alpha }}{\Gamma(\alpha)e^{\gamma t}}\int_{0}^{t}\left( t-\tau\right) ^{\alpha
-1}\frac{L(\tau)\| \xi \left( \tau\right) -\tilde{\xi }\left( \tau\right) \|
\tau^{1-\alpha }\tau^{\alpha -1}e^{\gamma \tau}}{e^{\gamma \tau}}d\tau 
\notag \\
&\leq \frac{Lt^{1-\alpha }}{\Gamma(\alpha)e^{\gamma t}}\| \xi -\tilde{\xi }\| _{w,T
}\int_{0}^{t}\left( t-\tau\right) ^{\alpha -1}\tau^{\alpha -1}e^{\gamma
\tau}d\tau,  \label{5}
\end{align}%
where $\xi, \tilde{\xi}\in C_{1-\alpha}([0,T],\mathbb{R}^s)$ and 
\begin{equation*}
\left\Vert \xi \right\Vert _{w,T }:=\underset{t\in \left[ 0,T\right] }{\sup }%
\frac{t^{1-\alpha }\left\Vert \xi (t)\right\Vert }{e^{\gamma t}}.
\end{equation*}%
Moreover,%
\begin{align*}
\frac{1}{e^{\gamma t}}\int_{0}^{t}\left( t-\tau\right) ^{\alpha
-1}\tau^{\alpha -1}e^{\gamma \tau}d\tau &= \int_{0}^{t/2}\left(
t-\tau\right) ^{\alpha -1}\tau^{\alpha -1}e^{-\gamma \left( t-\tau\right)
}d\tau+\int_{t/2}^{t}\left( t-\tau\right) ^{\alpha -1}\tau^{\alpha
-1}e^{-\gamma \left( t-\tau\right) }d\tau \\
&\leq \frac{2^{1-\alpha }}{t^{1-\alpha }}\int_{0}^{t/2}\tau^{\alpha
-1}e^{-\gamma \tau}d\tau+\frac{2^{1-\alpha }}{t^{1-\alpha }}%
\int_{0}^{t/2}\tau^{\alpha -1}e^{-\gamma \tau}d\tau \\
&\leq \frac{2^{2-\alpha }}{t^{1-\alpha }\gamma ^{\alpha }}\Gamma \left(
\alpha \right) .
\end{align*}%
The last estimate together with (\ref{5}) implies that 
\begin{equation*}
\|\mathcal{T}_{x_0}\xi-\mathcal{T}_{x_0}\hat{\xi}\|_{w,T}\leq \frac{%
L2^{2-\alpha }}{\gamma^\alpha}\times\|\xi-\hat{\xi}\|_{w,T}.
\end{equation*}
Choosing $\gamma $ such that 
\begin{equation*}
\frac{L2^{2-\alpha } }{\gamma ^{\alpha }}<1
\end{equation*}%
implies that the operator $\mathcal{T}_{x_{0}}$ is contractive. By Banach
fixed point theorem we see that the operator $\mathcal{T}_{x_0}$ has a
unique fixed point in $C_{1-\alpha}([0,T];\mathbb{R}^s)$ which is also the
unique solution of the integral equation \eqref{integral_eq1}. The proof is
complete.
\end{proof}

\section{Attractivity of solutions to Riemann--Liouville fractional
differential systems}

Consider the equation 
\begin{equation}
D_{0_{+}}^{\alpha }x(t)=Ax(t)+Q\left( t\right) x(t)+g(t),  \label{2}
\end{equation}
where $A\in\mathbb{R}^{s\times s}$, $Q:[0,\infty)\rightarrow \mathbb{R}%
^{s\times s}$ and $g:[0,\infty)\rightarrow \mathbb{R}^s$ are continuous
functions.

From Theorem \ref{main_result1}, we see that for any $x_0\in \mathbb{R}^s$,
the equation \eqref{2} with the initial condition 
\begin{equation*}
\lim_{t\to 0^+}t^{1-\alpha}x(t)=x_0
\end{equation*}
has a unique global solution in $C_{1-\alpha}([0,\infty),\mathbb{R}^s)$. In
this section, we will study the global attractivity of \eqref{2}. We recall
here this important definition, see e.g. \cite[Definition 2.4]{Chen}.

\begin{definition}
The equation \eqref{2} is called globally attractive if for any $x_0\in%
\mathbb{R}^s$ the solution $\varphi(\cdot,x_0)$ of \eqref{2} such that 
\begin{equation*}
\lim_{t\to 0^+}t^{1-\alpha}\varphi(t,x_0)=x_0
\end{equation*}
tends to zero at infinity, i.e. 
\begin{equation*}
\lim_{t\to \infty}\varphi(t,x_0)=0.
\end{equation*}
\end{definition}

\begin{theorem}
\label{main_result2} Consider the equation \eqref{2}. Suppose that $\sigma{%
(A)}\subset \Lambda_\alpha^s$, the matrix valued function $Q:\left[ 0,\infty
\right) \rightarrow \mathbb{R}^{s\times s}$ satisfies%
\begin{equation*}
\sup_{t\geq 0}\;t^{1-\alpha }\int_{0}^{t}\left( t-\tau\right) ^{\alpha -1}
||| E_{\alpha ,\alpha }\left( \left( t-\tau\right) ^{\alpha }A\right)
Q(\tau)||| \;d\tau<1,
\end{equation*}%
and $g:[0,\infty)\rightarrow\mathbb{R}^s$ is continuous such that 
\begin{equation*}
\sup_{t\geq 0}\;t^{1-\alpha }\int_{0}^{t}\left( t-\tau\right) ^{\alpha -1}
\| E_{\alpha ,\alpha }\left( \left( t-\tau\right) ^{\alpha }A\right)
g(\tau)\| \;d\tau<\infty.
\end{equation*}
Then for any $x_0\in\mathbb{R}^s$, we have $\varphi(\cdot,x_0)\in
C^0_{1-\alpha}([0,\infty),\mathbb{R}^s)$. In particular, the equation %
\eqref{2} is globally attractive.
\end{theorem}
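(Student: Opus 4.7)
The plan is to run a variation-of-constants and Banach fixed-point argument directly on the space $(C^0_{1-\alpha}([0,\infty),\mathbb{R}^s),\|\cdot\|_{1-\alpha})$, and then derive the attractivity conclusion as a free corollary. First, rewrite \eqref{2} via the Mittag--Leffler matrix function: any solution $\varphi(\cdot,x_0)\in C_{1-\alpha}$ of \eqref{2} with $\lim_{t\to 0^+}t^{1-\alpha}\varphi(t,x_0)=x_0$ satisfies
\[
\varphi(t,x_0)=\Gamma(\alpha)\,t^{\alpha-1}E_{\alpha,\alpha}(t^{\alpha}A)x_0+\int_0^t(t-\tau)^{\alpha-1}E_{\alpha,\alpha}((t-\tau)^{\alpha}A)\bigl[Q(\tau)\varphi(\tau,x_0)+g(\tau)\bigr]\,d\tau.
\]
Motivated by this, define the operator $\mathcal{T}:C^0_{1-\alpha}([0,\infty),\mathbb{R}^s)\to C^0_{1-\alpha}([0,\infty),\mathbb{R}^s)$ by plugging a generic $x$ in place of $\varphi(\cdot,x_0)$ on the right-hand side. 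The existence/uniqueness guaranteed by Theorem~\ref{main_result1} tells me that any fixed point of $\mathcal{T}$ in $C_{1-\alpha}$ must coincide with $\varphi(\cdot,x_0)$, so it suffices to locate the fixed point in the smaller space $C^0_{1-\alpha}$.

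Next I would check that $\mathcal{T}$ actually sends $C^0_{1-\alpha}$ into itself. The free term $t^{1-\alpha}\Gamma(\alpha)t^{\alpha-1}E_{\alpha,\alpha}(t^{\alpha}A)x_0=\Gamma(\alpha)E_{\alpha,\alpha}(t^{\alpha}A)x_0$ is continuous on $[0,t_0]$ and, by Lemma~2.1(i), of order $O(t^{-2\alpha})$ for $t\geq t_0$, hence uniformly bounded on $[0,\infty)$. The $g$-contribution is bounded by hypothesis. The $Q$-contribution is handled using $\|x(\tau)\|\leq \|x\|_{1-\alpha}\tau^{\alpha-1}$ for $x\in C^0_{1-\alpha}$ together with the hypothesis $\sup_{t\geq 0}t^{1-\alpha}\int_0^t(t-\tau)^{\alpha-1}|||E_{\alpha,\alpha}((t-\tau)^{\alpha}A)Q(\tau)|||\,d\tau<1$, which provides the necessary uniform bound. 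Continuity of $\mathcal{T}x$ on $(0,\infty)$ and of the limit $\lim_{t\to 0^+}t^{1-\alpha}(\mathcal{T}x)(t)=x_0$ is established along the same lines as in the proof of Theorem~\ref{main_result1}.

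The contraction step is where the strict inequality $<1$ in the hypothesis is decisive. For $x,y\in C^0_{1-\alpha}$,
\[
t^{1-\alpha}\|(\mathcal{T}x-\mathcal{T}y)(t)\|\leq \|x-y\|_{1-\alpha}\cdot t^{1-\alpha}\int_0^t(t-\tau)^{\alpha-1}|||E_{\alpha,\alpha}((t-\tau)^{\alpha}A)Q(\tau)|||\,d\tau,
\]
so taking the supremum in $t$ gives $\|\mathcal{T}x-\mathcal{T}y\|_{1-\alpha}\leq\kappa\|x-y\|_{1-\alpha}$ with $\kappa<1$. Banach's fixed-point theorem then produces the required fixed point in $C^0_{1-\alpha}$, which by uniqueness equals $\varphi(\cdot,x_0)$; thus $\varphi(\cdot,x_0)\in C^0_{1-\alpha}$.

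Finally, global attractivity is immediate: $\varphi(\cdot,x_0)\in C^0_{1-\alpha}$ means $\|\varphi(t,x_0)\|\leq\|\varphi(\cdot,x_0)\|_{1-\alpha}\,t^{\alpha-1}$ for all $t>0$, and because $\alpha\in(0,1)$, $t^{\alpha-1}\to 0$ as $t\to\infty$, giving $\varphi(t,x_0)\to 0$. The main obstacle I anticipate is the balancing act in the invariance and contraction estimates: $\|x(\tau)\|$ has a $\tau^{\alpha-1}$ singularity at the origin while the kernel carries its own $(t-\tau)^{\alpha-1}$ singularity at $\tau=t$, so one must carefully match these weights against the hypotheses on $Q$ and $g$ (splitting the $\tau$-integral near $0$ and near $t$ if needed, and invoking Lemma~2.1(ii) where the integrand is bounded only by continuity arguments) to secure a \emph{uniform} bound over all $t\in[0,\infty)$.
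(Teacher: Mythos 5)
Your overall route is the same as the paper's: rewrite \eqref{2} by the variation-of-constants formula, set up the operator $\mathcal{T}_{x_0}$ on $\bigl(C^0_{1-\alpha}([0,\infty),\mathbb{R}^s),\|\cdot\|_{1-\alpha}\bigr)$, get a fixed point by Banach's theorem, identify it with $\varphi(\cdot,x_0)$ via uniqueness, and read off attractivity from $\|\varphi(t,x_0)\|\le\|\varphi(\cdot,x_0)\|_{1-\alpha}\,t^{\alpha-1}\to0$. The extra factor $\Gamma(\alpha)$ in your free term is harmless, and your treatment of the free term via Lemma 2.1(i) and of the $g$-term via the hypothesis is fine.

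The step you must repair is the contraction display. The only pointwise information available is $\|x(\tau)-y(\tau)\|\le\tau^{\alpha-1}\|x-y\|_{1-\alpha}$, so the estimate that actually follows is
\begin{equation*}
t^{1-\alpha}\|(\mathcal{T}x-\mathcal{T}y)(t)\|\le\|x-y\|_{1-\alpha}\; t^{1-\alpha}\int_0^t(t-\tau)^{\alpha-1}\,|||E_{\alpha,\alpha}((t-\tau)^{\alpha}A)Q(\tau)|||\;\tau^{\alpha-1}\,d\tau ,
\end{equation*}
with the weight $\tau^{\alpha-1}$ inside the integral; you dropped it, and since $\tau^{\alpha-1}>1$ near $\tau=0$ this is not a legitimate majorization, so the constant $\kappa<1$ you claim is not what the computation yields. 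The same weight is silently needed in your invariance step, where you assert that the stated hypothesis on $Q$ ``provides the necessary uniform bound'' --- it does not, because the hypothesis as printed carries no $\tau^{\alpha-1}$. Note that this is an inconsistency of the paper itself: its proof uses the weighted contraction constant $q=\sup_{t\ge0}t^{1-\alpha}\int_0^t(t-\tau)^{\alpha-1}|||E_{\alpha,\alpha}((t-\tau)^{\alpha}A)Q(\tau)|||\,\tau^{\alpha-1}\,d\tau$ and then invokes the unweighted hypothesis to claim $q<1$, while the first example of Section 5 verifies the condition \emph{with} the weight $\tau^{-1/2}$, indicating the intended hypothesis is the weighted one. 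So either read the hypothesis with the weight $\tau^{\alpha-1}$ and restore it in your displays (then your argument closes exactly as the paper's does), or concede that under the literal unweighted hypothesis the contraction step --- yours and the paper's alike --- does not go through.
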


\begin{proof}
It is easy to chack that the conditions of this Theorem implies that the
condition of Theorem \ref{main_result1} in particular there exists a unique
global solution of the initial problem for all initial conditions. Using the
variation-of-constants formula \cite[Theorem 4.2]{Idczak}, we see that the
solution $\varphi \left( \cdot ,x_{0}\right) $ satisfies the following
equation%
\begin{align*}
\varphi \left( t,x_{0}\right)& =t^{\alpha -1}E_{\alpha ,\alpha }\left(
At\right) x_{0}+\int_{0}^{t}\left( t-\tau \right) ^{\alpha -1}E_{\alpha
,\alpha }\left( \left( t-\tau \right) ^{\alpha }A\right) Q(\tau )\varphi
\left( \tau ,x_{0}\right) d\tau\\
&\hspace{2cm}+\int_0^t (t-\tau)^{\alpha-1}E_{\alpha,\alpha}((t-\tau)^\alpha A)g(\tau)\;d\tau
\end{align*}%
for all $t>0.$ For $x_{0}\in \mathbb{R}^{s}$ let us define an operator%
\begin{equation*}
\mathcal{T}_{x_{0}}:C_{1-\alpha }^{0}\left( [0,\infty ),\mathbb{R}%
^{s}\right) \rightarrow C_{1-\alpha }^{0}\left( [0,\infty ),\mathbb{R}%
^{s}\right) 
\end{equation*}%
by the following formula%
\begin{align*}
\left( \mathcal{T}_{x_{0}}\xi \right) \left( t\right)& =t^{\alpha
-1}E_{\alpha ,\alpha }\left( At\right) x_{0}+\int_{0}^{t}\left( t-\tau
\right) ^{\alpha -1}E_{\alpha ,\alpha }\left( \left( t-\tau \right) ^{\alpha
}A\right) Q(\tau )\xi \left( \tau \right) d\tau\\
&\hspace{2cm}+\int_0^t (t-\tau)^{\alpha-1}E_{\alpha,\alpha}((t-\tau)^\alpha A)g(\tau)\;d\tau,\quad \forall t>0.
\end{align*}%
It is obvious that $\mathcal{T}_{x_{0}}\xi \in C_{1-\alpha }^{0}\left(
[0,\infty ),\mathbb{R}^{s}\right) $ for each $\xi \in C_{1-\alpha
}^{0}\left( [0,\infty ),\mathbb{R}^{s}\right) .$ We will show that $\mathcal{%
T}_{x_{0}}$ is a contraction mapping, i.e. there exists a constant $q\in
\left( 0,1\right) $ such that 
\begin{equation*}
\Vert \mathcal{T}_{x_{0}}\xi -\mathcal{T}_{x_{0}}\tilde{\xi}\Vert _{1-\alpha
}\leq q\Vert \xi -\tilde{\xi}\Vert _{1-\alpha }
\end{equation*}%
for all $\xi ,$ $\tilde{\xi}\in C_{1-\alpha }^{0}\left( [0,\infty ),\mathbb{R%
}^{s}\right) $. Indeed, 
\begin{align*}
\left\Vert \left( \mathcal{T}_{x_{0}}\xi -\mathcal{T}_{x_{0}}\tilde{\xi}%
\right) \left( t\right) \right\Vert & \leq \int_{0}^{t}\left( t-\tau \right)
^{\alpha -1}|||E_{\alpha ,\alpha }\left( \left( t-\tau \right) ^{\alpha
}A\right) Q(\tau )|||\times \Vert \xi \left( \tau \right) -\tilde{\xi}\left(
\tau \right) \Vert  \\
& \leq \int_{0}^{t}\left( t-\tau \right) ^{\alpha -1}|||E_{\alpha ,\alpha
}\left( \left( t-\tau \right) ^{\alpha }A\right) Q(\tau )|||\;\tau ^{\alpha
-1}\;d\tau \times \Vert \xi -\tilde{\xi}\Vert _{1-\alpha }
\end{align*}%
and therefore 
\begin{align*}
\Vert \mathcal{\ T}_{x_{0}}\xi -\mathcal{T}_{x_{0}}\tilde{\xi}\Vert
_{1-\alpha }& \leq \Vert \xi -\tilde{\xi}\Vert _{1-\alpha }t^{1-\alpha
}\int_{0}^{t}\left( t-\tau \right) ^{\alpha -1}|||E_{\alpha ,\alpha }\left(
\left( t-\tau \right) ^{\alpha }A\right) Q(\tau )|||\;\tau ^{\alpha -1}d\tau 
\\
& \leq q\Vert \xi -\tilde{\xi}\Vert _{1-\alpha },
\end{align*}%
where%
\begin{equation*}
q:=\sup_{t\geq 0}\;t^{1-\alpha }\int_{0}^{t}|||\left( t-\tau \right)
^{\alpha -1}E_{\alpha ,\alpha }\left( \left( t-\tau \right) ^{\alpha
}A\right) Q(\tau )|||\;\tau ^{\alpha -1}d\tau .
\end{equation*}%
By Banach fixed point theorem there exists a unique fixed point $\xi $ of $%
\mathcal{T}_{x_{0}}$. It is easy to check that this fixed point is the
solution $\varphi (\cdot ,x_{0})$ of the equation (\ref{2}) and, since $\xi
\in C_{1-\alpha }^{0}\left( [0,\infty ),\mathbb{R}^{s}\right) ,$ in
particular 
\begin{equation*}
\underset{t\rightarrow \infty }{\lim }\varphi \left( t,x_{0}\right) =0.
\end{equation*}%
The proof is complete.
\end{proof}

Using Theorem \ref{main_result2}, we obtain immediately the following
corollary.

\begin{corollary}
Consider the equation \eqref{2}. Suppose that $\sigma {(A)}\subset \Lambda
_{\alpha }^{s}$, the function $Q:\left[ 0,\infty \right) \rightarrow \mathbb{%
R}^{s\times s}$ satisfies%
\begin{equation*}
\underset{t\geq 0}{\sup }\;||| Q(t)||| <\frac{1}{\underset{t\geq 0}{\sup }%
\;t^{1-\alpha }\int_{0}^{t}\left( t-\tau\right) ^{\alpha -1}||| E_{\alpha
,\alpha }\left( \left( t-\tau\right) ^{\alpha }A\right) |||\; d\tau},
\end{equation*}%
and $g:[0,\infty )\rightarrow \mathbb{R}^{s}$ is continuous such that 
\begin{equation*}
\sup_{t\geq 0}\;t^{1-\alpha }\int_{0}^{t}\left( t-\tau\right) ^{\alpha -1}||
E_{\alpha ,\alpha }\left( \left( t-\tau\right) ^{\alpha }A\right) g(\tau)||
\;d\tau<\infty .
\end{equation*}%
Then, 
\begin{equation*}
\varphi \left( \cdot ,x_{0}\right) \in C_{1-\alpha }^{0}([0,\infty ),\mathbb{%
R}^{s}),\quad \forall x_{0}\in \mathbb{R}^{s}.
\end{equation*}
\end{corollary}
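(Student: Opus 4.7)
The plan is to derive the corollary as an immediate consequence of Theorem \ref{main_result2} by checking that its two integral hypotheses follow from the present assumptions on $Q$ and $g$.

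The condition on $g$ stated in the corollary is identical to the one required by Theorem \ref{main_result2}, so it transfers without any work. For the condition on $Q$, I would use the submultiplicativity of the induced matrix norm $|||\cdot|||$ to write
$$|||E_{\alpha,\alpha}((t-\tau)^\alpha A)Q(\tau)||| \leq |||E_{\alpha,\alpha}((t-\tau)^\alpha A)|||\cdot |||Q(\tau)|||,$$
and then pull the constant $\sup_{s\geq 0}|||Q(s)|||$ outside the integral. This yields
$$t^{1-\alpha}\int_0^t (t-\tau)^{\alpha-1} |||E_{\alpha,\alpha}((t-\tau)^\alpha A)Q(\tau)|||\, d\tau \leq \sup_{s\geq 0}|||Q(s)||| \cdot t^{1-\alpha}\int_0^t (t-\tau)^{\alpha-1} |||E_{\alpha,\alpha}((t-\tau)^\alpha A)|||\, d\tau.$$
Taking the supremum over $t\geq 0$ on both sides and invoking the hypothesis on $\sup_{s\geq 0}|||Q(s)|||$, the product on the right-hand side is strictly less than $1$, which is precisely the first integral assumption of Theorem \ref{main_result2}.

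Once both hypotheses of Theorem \ref{main_result2} are verified, the conclusion $\varphi(\cdot,x_0)\in C_{1-\alpha}^0([0,\infty),\mathbb{R}^s)$ for every $x_0\in\mathbb{R}^s$ follows at once, and in particular global attractivity is obtained. There is essentially no obstacle to this proof, as it is only an elementary manipulation of operator norms; the sole subtlety worth flagging is that the hypothesis on $|||Q(t)|||$ tacitly requires the denominator $\sup_{t\geq 0} t^{1-\alpha}\int_0^t (t-\tau)^{\alpha-1} |||E_{\alpha,\alpha}((t-\tau)^\alpha A)|||\, d\tau$ to be finite, for otherwise the stated bound on $|||Q(t)|||$ would be vacuous.
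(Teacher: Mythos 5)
Your derivation is exactly the one the paper intends: the paper offers no separate argument, asserting the corollary follows ``immediately'' from Theorem \ref{main_result2}, and the intended route is precisely your submultiplicativity bound $|||E_{\alpha,\alpha}((t-\tau)^{\alpha}A)Q(\tau)|||\leq |||E_{\alpha,\alpha}((t-\tau)^{\alpha}A)|||\,|||Q(\tau)|||$ followed by pulling $\sup_{t\geq 0}|||Q(t)|||$ out of the integral. Your closing caveat is more than a formality, though: as printed (without the weight $\tau^{\alpha-1}$ that appears in part (ii) of the lemma in the Preliminaries and in the contraction constant $q$ in the proof of Theorem \ref{main_result2}), the denominator $\sup_{t\geq 0}t^{1-\alpha}\int_{0}^{t}(t-\tau)^{\alpha-1}|||E_{\alpha,\alpha}((t-\tau)^{\alpha}A)|||\,d\tau$ is actually infinite under $\sigma(A)\subset\Lambda_{\alpha}^{s}$ (the inner integral tends to a positive constant while $t^{1-\alpha}\to\infty$), so the hypothesis on $Q$ must be read with the extra factor $\tau^{\alpha-1}$ inserted, after which your argument goes through verbatim.
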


\begin{theorem}
\label{main_result3} Assume that $\sigma (A)\subset \Lambda _{\alpha }^{s}$, 
$Q:[0,\infty )\rightarrow \mathbb{R}^{s\times s}$ is continuous and
satisfies 
\begin{equation}
\lim_{t\rightarrow \infty }|||Q(t)|||=0,  \label{decay_cond}
\end{equation}%
and $g:[0,\infty )\rightarrow \mathbb{R}^{s}$ is continuous such that 
\begin{equation*}
\sup_{t\geq 0}\;t^{1-\alpha }\int_{0}^{t}\left( t-\tau \right) ^{\alpha
-1}\left\Vert E_{\alpha ,\alpha }\left( \left( t-\tau \right) ^{\alpha
}A\right) g(\tau )\right\Vert d\tau <\infty .
\end{equation*}%
Then for any $x_{0}\in \mathbb{R}^{s}$, the solution $\varphi (\cdot ,x_{0})$
of \eqref{2} satisfies 
\begin{equation*}
\varphi (\cdot ,x_{0})\in C_{1-\alpha }^{0}([0,\infty ),\mathbb{R}^{s}).
\end{equation*}%
In particular, the equation \eqref{2} is globally attractive.
\end{theorem}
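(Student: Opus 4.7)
The plan is to derive an a priori $C^0_{1-\alpha}$-bound on the global solution $\varphi(\cdot,x_0)$ by setting up a self-referential integral inequality, using the decay $|||Q(t)|||\to 0$ to localize the obstruction that prevents a direct appeal to Theorem~\ref{main_result2}. First I would verify that the hypotheses of Theorem~\ref{main_result1} are satisfied with the Lipschitz function $L(t):=|||A|||+|||Q(t)|||$, which is continuous and bounded because $Q$ is continuous and tends to zero. This produces a unique global solution $\varphi(\cdot,x_0)\in C_{1-\alpha}([0,\infty),\mathbb{R}^s)$ and, by the variation-of-constants formula used in the proof of Theorem~\ref{main_result2},
\[
\varphi(t,x_0)=t^{\alpha-1}E_{\alpha,\alpha}(t^\alpha A)x_0+\int_0^t(t-\tau)^{\alpha-1}E_{\alpha,\alpha}((t-\tau)^\alpha A)Q(\tau)\varphi(\tau,x_0)\,d\tau+R(t),
\]
where $R(t):=\int_0^t(t-\tau)^{\alpha-1}E_{\alpha,\alpha}((t-\tau)^\alpha A)g(\tau)\,d\tau$ satisfies $\sup_t t^{1-\alpha}\|R(t)\|<\infty$ by hypothesis. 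Setting $u(t):=t^{1-\alpha}\|\varphi(t,x_0)\|$, substituting $\|\varphi(\tau,x_0)\|=\tau^{\alpha-1}u(\tau)$, and using part (i) of the preceding lemma to dominate the free-motion term uniformly, I reach an inequality
\[
u(t)\le C_0+t^{1-\alpha}\int_0^t(t-\tau)^{\alpha-1}|||E_{\alpha,\alpha}((t-\tau)^\alpha A)|||\,|||Q(\tau)|||\,\tau^{\alpha-1}u(\tau)\,d\tau
\]
with $C_0$ independent of $t$.

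Next I would set $K:=\sup_{t\ge 0}t^{1-\alpha}\int_0^t(t-\tau)^{\alpha-1}|||E_{\alpha,\alpha}((t-\tau)^\alpha A)|||\tau^{\alpha-1}\,d\tau$, finite by part (ii) of the preceding lemma, and use (\ref{decay_cond}) to pick $\varepsilon\in(0,1/K)$ together with $T>0$ such that $|||Q(\tau)|||<\varepsilon$ whenever $\tau\ge T$. I split the integral at $T$ into a head $J_1(t)$ on $[0,T]$ and a tail $J_2(t)$ on $[T,t]$, and I introduce $U(t):=\sup_{s\in[0,t]}u(s)$. The tail is immediate: $J_2(t)\le\varepsilon K\,U(t)$. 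For the head and $t\ge T+t_0$ (with $t_0$ as in the preceding lemma), the pointwise estimate $(t-\tau)^{\alpha-1}|||E_{\alpha,\alpha}((t-\tau)^\alpha A)|||\le M/(t-T)^{\alpha+1}$, valid uniformly in $\tau\in[0,T]$, yields $J_1(t)=O\!\left(t^{1-\alpha}/(t-T)^{\alpha+1}\right)$, which remains bounded (and in fact vanishes) as $t\to\infty$; on the compact window $[T,T+t_0]$ continuity of $E_{\alpha,\alpha}$ supplies the bound directly.

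Combining the two pieces produces $u(t)\le D+\varepsilon K\,U(t)$ for all $t\ge T$ with some constant $D$, while on $[0,T]$ the function $u$ is bounded simply because $\varphi\in C_{1-\alpha}([0,\infty),\mathbb{R}^s)$. Taking the supremum over $[0,t]$ on both sides and exploiting $\varepsilon K<1$ yields a uniform bound on $U(t)$, so $\varphi(\cdot,x_0)\in C^0_{1-\alpha}([0,\infty),\mathbb{R}^s)$. Global attractivity then follows at once from $\|\varphi(t,x_0)\|\le U\,t^{\alpha-1}\to 0$ as $t\to\infty$, since $\alpha<1$. The delicate point, I expect, is the uniform control of the head $J_1(t)$: although $|||Q|||$ may be large on $[0,T]$, only the polynomial decay of $E_{\alpha,\alpha}(t^\alpha A)$ supplied by part (i) of the preceding lemma can absorb the factor $t^{1-\alpha}$ and keep $J_1$ bounded uniformly, which is what allows the Gronwall-style supremum argument to close.
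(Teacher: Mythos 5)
Your proposal is correct, but it follows a genuinely different route from the paper. You take the global solution already supplied by Theorem \ref{main_result1} (with $L(t)=|||A|||+|||Q(t)|||$, bounded since $Q$ is continuous and decays), write it via the variation-of-constants formula, and derive a self-referential bound for $u(t)=t^{1-\alpha}\|\varphi(t,x_0)\|$ that you close by an absorption (Gronwall-type supremum) argument: split at $T$ where $|||Q|||<\varepsilon<1/K$, bound the tail by $\varepsilon K\,U(t)$ with $K$ the supremum from part (ii) of the lemma, control the head on $[0,T]$ uniformly in $t$ using the polynomial decay of $(t-\tau)^{\alpha-1}E_{\alpha,\alpha}((t-\tau)^{\alpha}A)$ from part (i) (plus continuity on the compact window $[T,T+t_0]$ and the boundedness of $u$ on $[0,T]$, which follows from $\varphi\in C_{1-\alpha}$), and then use $\varepsilon K<1$ and the finiteness and monotonicity of $U(t)=\sup_{s\in[0,t]}u(s)$ to conclude $\sup_{t\ge 0}u(t)<\infty$, hence $\varphi(\cdot,x_0)\in C^0_{1-\alpha}$ and $\|\varphi(t,x_0)\|\le Ct^{\alpha-1}\to 0$. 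The paper instead never estimates the solution directly: it reruns the contraction-mapping scheme of Theorem \ref{main_result2} on $\bigl(C^0_{1-\alpha}([0,\infty),\mathbb{R}^s),\|\cdot\|_w\bigr)$ with the Bielecki weight $e^{\gamma t}$, choosing $K$ so that $\sup_t|||E_{\alpha,\alpha}(t^\alpha A)|||\sup_t|||Q(t)|||\le K$ and the Lemma (ii) supremum divided by $K$ is at most $1/4$, handling times before $T$ (where $Q$ may be large) through the factor $K2^{2-\alpha}\Gamma(\alpha)/\gamma^{\alpha}$ made small by taking $\gamma$ large, and times after $T$ through $|||Q|||\le 1/K$; membership of the fixed point in $C^0_{1-\alpha}$ then gives attractivity for free. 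What your route buys is that it avoids the weighted norm and the verification that the operator maps $C^0_{1-\alpha}$ into itself and is contractive there, at the cost of invoking Theorem \ref{main_result1} and treating the solution on $[0,T]$ as a known bounded quantity in the head estimate; both arguments ultimately rest on the same variation-of-constants representation and on the two parts of the preliminary lemma, and your a priori estimate closes correctly because $U(t)$ is finite on compacts and nondecreasing.
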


\begin{proof}
Let us choose $K>0$ large enough such that 
\begin{equation*}
\sup_{t\geq 0}|||E_{\alpha,\alpha}(t^\alpha A)|||\times \sup_{t\geq
0}|||Q(t)|||\leq K
\end{equation*}
and 
\begin{equation*}
\frac{\sup_{t\geq 0}t^{1-\alpha}\int_0^t
(t-\tau)^{\alpha-1}|||E_{\alpha,\alpha}((t-\tau)^\alpha)A)|||\;\tau^{%
\alpha-1}\;d\tau}{K}\leq \frac{1}{4}.
\end{equation*}
Let $\gamma>0$ be arbitrary but fixed. On the space $C_{1-\alpha}^0([0,%
\infty),\mathbb{R}^s)$, we defined a functional $\|\cdot\|_w$ as below 
\begin{equation*}
\|\xi\|_w:=\sup_{t\geq 0}\frac{t^{1-\alpha}\|\xi(t)\|}{\exp{(\gamma t)}}%
,\quad \forall \xi\in C_{1-\alpha}^0([0,\infty),\mathbb{R}^s).
\end{equation*}
It is obvious to see that $(C_{1-\alpha}^0([0,\infty),\mathbb{R}%
^s),\|\cdot\|_w)$ is also a Banach space.

Now for each $x_0\in \mathbb{R}^s$, on the space $C_{1-\alpha}^0([0,\infty),%
\mathbb{R}^s)$, we construct an operator $\mathcal{T}_{x_0}$ by 
\begin{align*}
\mathcal{T}_{x_0}\xi(t)&:=t^{\alpha -1}E_{\alpha ,\alpha }( t^\alpha A)
x_{0}+\int_{0}^{t}\left( t-\tau\right) ^{\alpha -1}E_{\alpha ,\alpha }\left(
\left( t-\tau\right) ^{\alpha }A\right) Q(\tau)\xi\left( \tau\right) d\tau\\
&\hspace{2cm}+\int_0^t (t-\tau)^{\alpha-1}E_{\alpha,\alpha}((t-\tau)^\alpha A)g(\tau)\;d\tau
\end{align*}%
for all $t>0.$ This operator is well-defined.

Because \eqref{decay_cond}, there exists $T>0$ (large enough) such that 
\begin{equation*}
||| Q(t)||| \leq \frac{1}{K},\quad \forall t\geq T.
\end{equation*}%
Consider the case $t\in \lbrack 0,T]$, we obtain the estimates 
\begin{align*}
\frac{t^{1-\alpha }\Vert \mathcal{T}_{x_{0}}\xi (t)-\mathcal{T}_{x_{0}}%
\tilde{\xi}(t)\Vert }{\exp {(\gamma t)}}& \leq \frac{t^{1-\alpha }}{\exp {%
(\gamma t)}}\int_{0}^{t}(t-\tau )^{\alpha -1}||| E_{\alpha ,\alpha }((t-\tau
)^{\alpha }A)||| \times ||| Q(\tau )||| \times \Vert \xi (\tau )-\tilde{\xi}%
(\tau )\Vert d\tau \\
& \leq K\times t^{1-\alpha }\int_{0}^{t}(t-\tau )^{\alpha -1}\tau ^{\alpha
-1}\exp {(-\gamma (t-\tau ))}d\tau \times \Vert \xi -\tilde{\xi}\Vert _{w} \\
& \leq \frac{K2^{2-\alpha }\Gamma (\alpha )}{\gamma ^{\alpha }}\times \Vert
\xi -\tilde{\xi}\Vert _{w}
\end{align*}%
for any $\xi ,\tilde{\xi}\in C_{1-\alpha }^{0}([0,\infty ),\mathbb{R}^{s})$.
On the other hand, for $t>T$, we have 
\begin{align*}
\frac{t^{1-\alpha }\Vert \mathcal{T}_{x_{0}}\xi (t)-\mathcal{T}_{x_{0}}%
\tilde{\xi}(t)\Vert }{\exp {(\gamma t)}}& \leq \frac{t^{1-\alpha }}{\exp {%
(\gamma t)}}\int_{0}^{t}(t-\tau )^{\alpha -1}||| E_{\alpha ,\alpha }((t-\tau
)^{\alpha }A)||| \times ||| Q(\tau )||| \times \Vert \xi (\tau )-\tilde{\xi}%
(\tau )\Vert \;d\tau \\
& \leq I_{1}(t)+I_{2}(t),
\end{align*}%
where 
\begin{equation*}
I_{1}(t):=\frac{t^{1-\alpha }}{\exp {(\gamma t)}}\int_{0}^{T}(t-\tau
)^{\alpha -1}||| E_{\alpha ,\alpha }((t-\tau )^{\alpha }A)||| \times |||
Q(\tau )||| \times \Vert \xi (\tau )-\tilde{\xi}(\tau )\Vert \;d\tau
\end{equation*}%
and 
\begin{equation*}
I_{2}(t):=\frac{t^{1-\alpha }}{\exp {(\gamma t)}}\int_{T}^{t}(t-\tau
)^{\alpha -1}||| E_{\alpha ,\alpha }((t-\tau )^{\alpha }A)||| \times |||
Q(\tau )||| \times \Vert \xi (\tau )-\tilde{\xi}(\tau )\Vert \;d\tau .
\end{equation*}%
By using the same arguments as in the proof of Theorem \ref{main_result1},
we obtain 
\begin{equation*}
I_{1}(t)\leq \frac{K2^{2-\alpha }\Gamma (\alpha )}{\gamma ^{\alpha }}\times
\Vert \xi -\tilde{\xi}\Vert _{w}.
\end{equation*}%
For the quantity $I_{2}(t)$, we estimate 
\begin{align*}
I_{2}(t)& \leq t^{1-\alpha }\int_{T}^{t}(t-\tau )^{\alpha -1}||| E_{\alpha
,\alpha }((t-\tau )^{\alpha }A)||| \tau ^{\alpha -1}\exp {(-\gamma (t-\tau ))%
}\;d\tau \times \frac{1}{K}\times \Vert \xi -\tilde{\xi}\Vert _{w} \\
& \leq \frac{\sup_{t\geq 0}t^{1-\alpha }\int_{0}^{t}(t-\tau )^{\alpha -1}|||
E_{\alpha ,\alpha }((t-\tau )^{\alpha }A)||| \tau ^{\alpha -1}\;d\tau }{K}%
\times \Vert \xi -\tilde{\xi}\Vert _{w}.
\end{align*}%
Thus, 
\begin{equation*}
\Vert \mathcal{T}_{x_{0}}\xi -\mathcal{T}_{x_{0}}\tilde{\xi}\Vert _{w}\leq
\left( \frac{K2^{2-\alpha }\Gamma (\alpha )}{\gamma ^{\alpha }}+\frac{%
\sup_{t\geq 0}t^{1-\alpha }\int_{0}^{t}(t-\tau )^{\alpha -1}||| E_{\alpha
,\alpha }((t-\tau )^{\alpha }A)||| \tau ^{\alpha -1}\;d\tau }{K}\right)
\times \Vert \xi -\tilde{\xi}\Vert _{w}
\end{equation*}%
for all $\xi ,\tilde{\xi}\in C_{1-\alpha }^{0}([0,\infty ),\mathbb{R}^{s})$.
This implies that $\mathcal{T}_{x_{0}}$ is contractive in $(C_{1-\alpha
}^{0}([0,\infty ),\mathbb{R}^{s}),\Vert \cdot \Vert _{w})$ if we choose $%
\gamma >0$ satisfies 
\begin{equation*}
\frac{K2^{2-\alpha }\Gamma (\alpha )}{\gamma ^{\alpha }}\leq \frac{1}{4}.
\end{equation*}%
By Banach fixed point theorem, the operator $\mathcal{T}_{x_{0}}$ has a
unique fixed point in $C_{1-\alpha }^{0}([0,\infty ),\mathbb{R}^{s})$ which
is also the unique solution $\varphi (\cdot ,x_{0})$ of \eqref{2} on $%
[0,\infty )$. Because $\varphi (\cdot ,x_{0})\in C_{1-\alpha }^{0}([0,\infty
),\mathbb{R}^{s})$, the proof is complete.
\end{proof}

\section{Example}

In this section, we give some examples to illustrate for the theoretical
results above.

\begin{example}
Consider the equation 
\begin{equation}
D_{0+}^{1/2}x(t)=-x(t)+Q(t)x(t)+g(t),  \label{Exam1}
\end{equation}%
where $Q:[0,\infty )\rightarrow \mathbb{R}$ is continuous and satisfies 
\begin{equation*}
|Q(t)|\leq \frac{1}{\sup_{t\geq 0}t^{1/2}E_{1/2}(-t^{1/2})},\quad \forall
t>0,
\end{equation*}%
and $g(t)=\frac{1}{1+t^{1/2}}$ for all $t\geq 0$. Then for any $x_{0}\in 
\mathbb{R}$, the solution $\varphi (\cdot ,x_{0})$ of \eqref{Exam1} with the
initial condition $\lim_{t\rightarrow 0^{+}}t^{1/2}x(t)=x_{0}$ converges to
zero as $t$ tends to infinity. Indeed, from the assumption on the
function $Q(\cdot )$, we have 
\begin{align*}
& \left\vert t^{1/2}\int_{0}^{t}(t-\tau )^{-1/2}E_{1/2,1/2}\left( -(t-\tau
)^{1/2}Q(\tau )\right) \;d\tau \right\vert  \\
& \hspace{2cm}\leq \frac{t^{1/2}}{\sup_{t\geq 0}t^{1/2}E_{1/2}(-t^{1/2})}%
\int_{0}^{t}(t-\tau )^{-1/2}E_{1/2,1/2}(-(t-\tau )^{1/2})\tau ^{-1/2}\;d\tau 
\\
& \hspace{2cm}\leq \frac{t^{1/2}}{\sup_{t\geq 0}t^{1/2}E_{1/2}(-t^{1/2})}%
\times E_{1/2}(-t^{1/2}),
\end{align*}%
see \cite[Formula (1.100), p.~25]{Podlubny}. Hence, 
\begin{equation*}
\left\vert t^{1/2}\int_{0}^{t}(t-\tau )^{-1/2}E_{1/2,1/2}\left( -(t-\tau
)^{1/2}Q(\tau )\right) \;d\tau \right\vert <1.
\end{equation*}%
On the other hand, 
\begin{align*}
& \left\vert t^{1/2}\int_{0}^{t}(t-\tau )^{-1/2}E_{1/2,1/2}\left( -(t-\tau
)^{1/2}g(\tau )\right) \;d\tau \right\vert  \\
& \hspace{2cm}\leqslant \left\vert t^{1/2}\int_{0}^{t}(t-\tau
)^{-1/2}E_{1/2,1/2}\left( -(t-\tau )^{1/2}\tau ^{-1/2}\right) \;d\tau
\right\vert  \\
& \hspace{2cm}=t^{1/2}E_{1/2}(-t^{1/2})<\infty 
\end{align*}%
for all $t\geq 0$. Following Theorem \ref{main_result2}, we see that for any 
$x_{0}\in \mathbb{R}$, the solution $\varphi (\cdot ,x_{0})$ of \eqref{Exam1}
with the initial condition $\lim_{t\rightarrow 0^{+}}t^{1/2}x(t)=x_{0}$
converges to zero as $t$ tends to infinity.
\end{example}

\begin{example}
Consider the equation 
\begin{equation}  \label{Exam2}
D^{1/2}_{0+}x(t)=-x(t)+Q(t)x(t)+g(t),
\end{equation}
where $$Q(t):=\begin{cases}
&1000,\quad t\in [0,1000],\\
&\frac{1000^2}{t},\quad t\in [1000,\infty),
\end{cases}$$ and $g(t)=\frac{1}{1+t^{1/2}}$
for all $t\geq 0$. In this case, it is easy to see that the conditions in
Theorem \ref{main_result3} are satisfied. Thus, for any $x_0\in\mathbb{R}$,
the solution $\varphi(\cdot,x_0)$ of \eqref{Exam2} with the initial
condition $\lim_{t\to 0^+}t^{1/2}x(t)=x_0$ decays at infinity.
\end{example}

\section{Conclusions}

In this paper, we have studied the existence and attractivity of solution of
nonlinear differential system with Riemann-Liouville fractional derivative.
Using Banach fixed point theorem and properties of Mittag-Leffler functions
we obtained some sufficient conditions for these properties.

\section{Acknowledgment}

The research of H.T. Tuan is supported by the Vietnam National Foundation
for Science and Technology Development (NAFOSTED). The research of Adam
Czornik and Micha\l {} Niezabitowski was done as parts of the projects
funded by the National Science Centre in Poland granted according to
decisions DEC-2012/07/N/ST7/03236 and DEC-2015/19/D/ST7/03679, respectively.
Moreover, the calculations were performed with the use of IT infrastructure
of GeCONiI Upper Silesian Centre for Computational Science and Engineering
(NCBiR grant no POIG.02.03.01-24-099/13). The research of J.J. Nieto is
partially supported by Agencia Estatal de Innovaci\'on (AEI) of Spain,
project MTM2016-75140-P, cofinanced by FEDER and Xunta de Galicia, grants
GRC 2015\^{a}\euro ``004 and R 2016/022.

\end{document}